\newtheorem{thm}{Theorem}
\newtheorem{Def}{Definition}
\newtheorem{prop}{Proposition}
\newtheorem{cor}{Corollary}
\newtheorem{exs}{Examples}
\newtheorem{lem}{Lemma}
\newtheorem{rem}{Remark}
\newtheorem{rems}{Remarks}
\newcommand{\bexs}{\begin{exs}}
\newcommand{\eexs}{\end{exs}}
\newcommand{\bt}{\begin{thm}}
\newcommand{\et}{\end{thm}}
\newcommand{\bd}{\begin{Def}}
\newcommand{\ed}{\end{Def}}
\newcommand{\bl}{\begin{lem}}
\newcommand{\el}{\end{lem}}
\newcommand{\bp}{\begin{prop}}
\newcommand{\ep}{\end{prop}}
\newcommand{\brem}{\begin{rem}}
\newcommand{\erem}{\end{rem}}
\newcommand{\brems}{\begin{rems}}
\newcommand{\erems}{\end{rems}}
\newcommand{\bc}{\begin{cor}}
\newcommand{\ec}{\end{cor}}
\newcommand{\bpr}{\begin{proof}}
\newcommand{\epr}{\end{proof}}
\def\NN{\mathbb{N}}
\def\QQ{\mathbb{Q}}
\def\ZZ{\mathbb{Z}}
\begin{document}
\title[ On the rank of the $2$-class group...]{On the rank of the $2$-class\\ group of $\QQ(\sqrt{p}, \sqrt{q},\sqrt{-1} )$}
\author[Abdelmalek AZIZI]{Abdelmalek Azizi}
\address{Abdelmalek Azizi and Abdelkader Zekhnini: Département de Mathématiques, Faculté des Sciences, Université Mohammed 1, Oujda, Morocco }
\author[Mohammed Taous]{Mohammed Taous}
\author{Abdelkader Zekhnini}
\email{abdelmalekazizi@yahoo.fr}
\email{zekha1@yahoo.fr}
\address{Mohammed Taous: Département de Mathématiques, Faculté des Sciences et Techniques, Université Moulay Ismail, Errachidia, Morocco}
\email{taousm@hotmail.com}
\subjclass[2010]{11R11, 11R29, 11R32,  11R37}
\keywords{$2$-class groups, Hilbert class fields, $2$-metacyclic groups.}
\maketitle
\selectlanguage{english}
\begin{abstract}
 Let $d$ be a square-free integer, $\mathbf{k}=\QQ(\sqrt d,\,i)$ and $i=\sqrt{-1}$. Let $\mathbf{k}_1^{(2)}$ be the Hilbert $2$-class field of
$\mathbf{k}$, $\mathbf{k}_2^{(2)}$ be the Hilbert $2$-class field
of $\mathbf{k}_1^{(2)}$ and $G=\mathrm{Gal}(\mathbf{k}_2^{(2)}/\mathbf{k})$
be the Galois group of $\mathbf{k}_2^{(2)}/\mathbf{k}$.  Our goal is to give  necessary  and sufficient conditions to have  $G$    metacyclic in the case where $d=pq$, with $p$ and $q$ are primes  such that $p\equiv 1\pmod 8$ and $q\equiv 5\pmod 8$ or $p\equiv 1\pmod 8$ and $q\equiv 3\pmod 4$.
\end{abstract}
\section{Introduction}
 Let $k$ be an algebraic number field and let $Cl_2(k)$ denote its $2$-class group.  Denote by $k_2^{(1)}$ the Hilbert 2-class field of $k$ and  by $k_2^{(2)}$ its second  Hilbert 2-class field. Put $G=\mathrm{Gal}(k_2^{(2)}/k)$ and $G'$ its derived group, then its well known that $C/G'\simeq Cl_2(k)$. An important problem in Number Theory is to determine the structure of $G$, since the knowledge of $G$, its structure and its generators solve a lot of problems in number theory as capitulation problems, the finiteness or not of the towers  of number fields and the structures of the $2$-class groups of the unramified extensions of $k$ within $k_2^{(1)}$. In several times,  the knowledge of the rank of $G$ allows to know the structure of G.  In this paper, we give an example of this situation.

 Let $\mathbf{k}=\QQ(\sqrt{pq}, i)$, where $p$ and $q$ are two different primes,  then the genus field of $\mathbf{k}$ is $\mathbf{k}^*=\QQ(\sqrt{p}, \sqrt{q},\sqrt{-1} )$. According to \cite{AzTa09} $r_0$, the rank  of the $2$-class group of  $\mathbf{k}$,  is at most equal to  $3$. Moreover $r_0=3$ if and only  if  $p\equiv q\equiv 1\pmod 8$. Let $G=\mathrm{G}al(\mathbf{k}_2^{(2)}/\mathbf{k})$ be the Galois group of  $\mathbf{k}_2^{(2)}/\mathbf{k}$, where $\mathbf{k}_2^{(i+1)}$ is the Hilbert $2$-class field of $\mathbf{k}_2^{(i)}$, with $i=0$ or $1$ and $\mathbf{k}_2^{(0)}=\mathbf{k}$. The Artin Reciprocity implies that  $r_0=d(G)$, where $d(G)$ is the rank of  $G$.

In \cite{AzTa09}, the first and the second authors have shown that if $q=2$, then   $G$ is metacyclic non abelian if and only if  $p=x^2+32y^2$ and $x\not\equiv\pm 1\pmod 8$. In this paper,  we prove that if $p$ and $q$ are odd different primes, then $r$, the rank of $2$-class group of $\mathbf{k}^*$, helps to know in which case $G$ is metacyclic.
\section{The rank of the  $2$-class group of $\mathbf{k}^*$}
In what follows, we adopt the following notations: If $p\equiv 1\pmod 8$ is a prime, then
 $\left(\frac{2}{p}\right)_4$ will denote the rational biquadratic symbol which is equal to
 1 or -1, according as $2^{\frac{p-1}{4}}\equiv\pm 1\pmod p$. Moreover the
symbol $\left(\frac{p}{2}\right)_4$ is equal to $(-1)^{\frac{p-1}{8}}$. Let $k$ be a number field and $l$ be a prime; then $\mathfrak{l}_k$ will denote  a prime ideal of $k$ above $l$. We denote, also, by  $\left(\dfrac{x, y}{\mathfrak{l}_k}\right)$ (resp. $\displaystyle\left(\dfrac{x}{\mathfrak{l}_k}\right)$)  the Hilbert symbol (resp. the quadratic residue  symbol) for the prime $\mathfrak{l}_k$  applied to  $(x, y)$ (resp. $x$). A $2$-group $H$ is said of type  $(2^{n_1}, 2^{n_1}, ..., 2^{n_s})$  if it is  isomorphic to $\ZZ/2^{n_1}\times\ZZ/2^{n_2}\times...\ZZ/2^{n_s}$, where $n_i\in\NN$. For all number field  $k$, $h(k)$ will denote the $2$-class number of $k$. Finally, $r_0$ (resp. $r$) denotes the rank of the $2$-class group of $\mathbf{k}$ (resp. $\mathbf{k}^*$).
\begin{lem}\label{1}
If  $p\equiv 5\pmod 8$ and $q\equiv 3\pmod 4$, then $G$ is cyclic and $r=1$.
\end{lem}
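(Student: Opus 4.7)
The lemma packages two statements, and they are tightly linked. By Artin reciprocity and Galois theory one has $G^{\mathrm{ab}} \cong \mathrm{Gal}(\mathbf{k}_2^{(1)}/\mathbf{k}) \cong Cl_2(\mathbf{k})$, and Burnside's basis theorem applied to the pro-$2$ group $G$ gives $d(G) = r_0$. Hence ``$G$ cyclic'' is equivalent to $r_0 = 1$, and the proof reduces to establishing $r_0 = 1$ and $r = 1$.

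For $r_0$: the genus field $\mathbf{k}^*/\mathbf{k}$ is unramified of degree $2$, hence $r_0 \geq 1$; and by the introduction $r_0 = 3$ forces $p\equiv q\equiv 1\pmod 8$, so here $r_0 \in \{1,2\}$. To exclude $r_0 = 2$ I would apply the Chevalley ambiguous class number formula to the cyclic extension $\mathbf{k}/\QQ(i)$. Under the hypotheses, $p \equiv 5\pmod 8$ splits in $\QQ(i)$ as $\pi\bar\pi$, while $q \equiv 3\pmod 4$ is inert; so the primes of $\QQ(i)$ ramifying in $\mathbf{k}$ are $\pi$, $\bar\pi$, $q\mathcal{O}_{\QQ(i)}$, and possibly the prime above $2$. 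A short counting argument combined with a Hilbert symbol evaluation of the unit-norm index $[E_{\QQ(i)} : E_{\QQ(i)} \cap N_{\mathbf{k}/\QQ(i)}(\mathbf{k}^\times)]$ at these primes yields $r_0 = 1$, and hence $G$ cyclic.

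For $r$: once $G$ is cyclic (hence abelian) one has $G' = 1$, so $\mathbf{k}_2^{(2)} = \mathbf{k}_2^{(1)}$. The chain $\mathbf{k} \subsetneq \mathbf{k}^* \subseteq \mathbf{k}_2^{(1)}$ together with the fact that $\mathbf{k}_2^{(1)}/\mathbf{k}^*$ is unramified abelian (as a subextension of the abelian unramified $\mathbf{k}_2^{(1)}/\mathbf{k}$) forces $\mathbf{k}_2^{(1)} = (\mathbf{k}^*)_2^{(1)}$. Thus $Cl_2(\mathbf{k}^*) \cong \mathrm{Gal}(\mathbf{k}_2^{(1)}/\mathbf{k}^*)$ is the index-$2$ subgroup of the cyclic $Cl_2(\mathbf{k})$, itself cyclic, so $r \leq 1$. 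It remains to rule out $r = 0$, i.e.\ to show $|Cl_2(\mathbf{k})| \geq 4$. I would handle this either by a R\'edei-type $4$-rank computation for $Cl_2(\mathbf{k})$ or, equivalently, by the biquadratic class-number formula
\[
h_2(\mathbf{k}) \;=\; \tfrac{1}{2}\, Q(\mathbf{k})\, h_2(\QQ(\sqrt{pq}))\, h_2(\QQ(\sqrt{-pq})),
\]
combined with the classical genus-theoretic evaluation of the $2$-part of the class numbers of the two quadratic subfields (which have three and two ramified primes in their discriminants, respectively, under our congruences).

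The main obstacle is the evaluation of the two unit indices that appear: $[E_{\QQ(i)} : E_{\QQ(i)} \cap N\mathbf{k}^\times]$ in the Chevalley step and $Q(\mathbf{k})$ in the Kuroda step. Both reduce, via the Hasse norm theorem, to computing Hilbert symbols at the ramified primes above $p$, $q$ and $2$. The rational biquadratic residue symbols $\left(\frac{2}{p}\right)_4$ and $\left(\frac{p}{2}\right)_4$ introduced at the start of the section are the tools tailored for exactly this task, and the hypotheses $p \equiv 5\pmod 8$, $q \equiv 3\pmod 4$ pin them down to definite values, giving both $r_0 = 1$ and $|Cl_2(\mathbf{k})| \geq 4$, and hence $r = 1$.
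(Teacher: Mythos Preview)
Your plan is sound in outline, but it diverges from the paper's proof in both length and method, and contains one concrete misstep.

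The paper's argument is two lines: it invokes \cite{McPaRa95} for the fact that $Cl_2(\mathbf{k})$ is cyclic under these congruences, so $d(G)=r_0=1$ and $G$ is cyclic; then, since $\mathbf{k}^*/\mathbf{k}$ is unramified of degree $2$, the $2$-class group of $\mathbf{k}^*$ sits inside the cyclic group $G$ and is therefore itself cyclic, giving $r\le 1$. (The paper simply writes $r=1$ without further comment, so your care in flagging the $r=0$ possibility is actually more scrupulous than the original.) Your proposal to recompute $r_0$ from scratch via the Chevalley formula over $\QQ(i)$ is a legitimate alternative and would make the lemma self-contained; what it buys is independence from the cited reference, at the cost of a page of symbol computations that the paper avoids.

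The misstep is in your last paragraph: the biquadratic symbols $\left(\frac{2}{p}\right)_4$ and $\left(\frac{p}{2}\right)_4$ are defined in this paper only for $p\equiv 1\pmod 8$; under the hypothesis $p\equiv 5\pmod 8$, $2$ is not even a quadratic residue modulo $p$ and $(p-1)/8\notin\ZZ$, so neither symbol makes sense. They are simply irrelevant here. The Chevalley step needs only ordinary quadratic symbols: the primes of $\QQ(i)$ ramified in $\mathbf{k}$ are the two primes above $p$ and the inert prime $q$ (the prime $(1+i)$ is unramified because $-pq\equiv 1\pmod 4$), giving $t=3$; and $i$ is not a local norm at a prime $\pi$ above $p$ since $i^{(p-1)/2}\equiv -1$ in $\ZZ[i]/\pi\cong\mathbb{F}_p$ when $(p-1)/2\equiv 2\pmod 4$, giving $e=1$ and hence $r_0=t-e-1=1$. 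So your scheme works, but with a simpler tool than the one you named.
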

\begin{proof}
 If $p\equiv 5\pmod 8$ and  $q\equiv 3\pmod 4$, then, according to \cite{McPaRa95}, the  $2$-class group of $\mathbf{k}$ is cyclic, so  $G$ is an abelian group of rank $1$. As  $\mathbf{k}^*$ is an unramified extension of  $\mathbf{k}$, then the  $2$-class group of $\mathbf{k}^*$ is also cyclic  and $r=1$.
\end{proof}
\begin{lem}\label{2}
If $p\equiv 1\pmod 8$ and $q\equiv 1\pmod 8$, then  $G$ is a non-metacyclic group .
\end{lem}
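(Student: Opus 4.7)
The plan is to exploit the general fact, already recalled in the introduction, that a metacyclic group is by definition an extension of a cyclic group by a cyclic group, hence is generated by at most two elements. So if one can show that $d(G)\ge 3$, then $G$ cannot be metacyclic.

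First I would invoke the formula $r_0=d(G)$, which follows from the Artin reciprocity isomorphism $G/G'\simeq Cl_2(\mathbf{k})$ combined with the Burnside basis theorem for the $2$-group $G$ (the rank of $G$ equals the rank of $G/G'\Phi(G)=G/G'G^{2}$, which equals the rank of $Cl_2(\mathbf{k})/Cl_2(\mathbf{k})^{2}$, that is $r_0$). This equality is stated verbatim in the introduction and needs no further argument.

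Next I would apply the result of \cite{AzTa09} recalled just before the lemma: under the hypothesis $p\equiv q\equiv 1\pmod 8$ one has $r_0=3$. Combining the two facts, $d(G)=3$.

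Finally, a metacyclic group $H$ fits into a short exact sequence $1\to C_1\to H\to C_2\to 1$ with $C_1,C_2$ cyclic, so $H$ is generated by (a generator of $C_1$ together with) a lift of a generator of $C_2$; in particular $d(H)\le 2$. Since $d(G)=3>2$, $G$ fails to be metacyclic, which concludes the proof. There is essentially no obstacle here: the statement is an immediate corollary of the rank computation of \cite{AzTa09} and the generation property of metacyclic groups, and the whole point of stating it as a separate lemma is to record it for use in the subsequent sections, where the more delicate cases $p\equiv 1\pmod 8,\ q\equiv 5\pmod 8$ and $p\equiv 1\pmod 8,\ q\equiv 3\pmod 4$ will require genuine work.
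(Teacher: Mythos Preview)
Your argument is correct and matches the paper's own proof essentially line for line: show $d(G)=r_0=3$ via Artin reciprocity and the cited rank computation, then observe that metacyclic groups have rank at most $2$. The only cosmetic difference is that the paper attributes the equality $r_0=3$ to \cite{McPaRa95} rather than \cite{AzTa09}, but both references yield the same conclusion.
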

\begin{proof}
 If $p\equiv 1\pmod 8$ and  $q\equiv 1\pmod 8$, then, according to  \cite{McPaRa95}, the $2$-class group of  $\mathbf{k}$ is of rank $3$, which is the rank of  $G$. This yields that $G$ is not metacyclic, since the  metacyclic groups are of ranks  $\leq 2$.
\end{proof}
According to the two  Lemmas \ref{1} and \ref{2}, it is interesting to assume, in what follows, that  $p\equiv 1\pmod 8$ and $q\equiv 5\pmod 8$ or $p\equiv 1\pmod 8$ and  $q\equiv 3\pmod 4$. So    $r_0=2$ (see \cite{McPaRa95}). We continue with the following lemmas.
\begin{lem}[\cite{Lm00}]\label{5:004}
Let $p\equiv 1\pmod 8$ be a prime, then
\begin{center}
   $ \displaystyle\left(\dfrac{i}{\mathfrak{p}_{\QQ(i)}}\right)=1$\quad and \quad $\displaystyle\left(\dfrac{1+i}{\mathfrak{p}_{\QQ(i)}}\right)=\left(\dfrac{2}{p}\right)_4\left(\dfrac{p}{2}\right)_4$.
\end{center}
\end{lem}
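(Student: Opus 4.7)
The plan is to compute both symbols directly from Euler's criterion in $\QQ(i)$, taking advantage of the fact that $p\equiv 1\pmod 8$ splits in $\QQ(i)$, so the prime $\mathfrak{p}_{\QQ(i)}$ above $p$ has norm $p$ and the quadratic residue symbol satisfies
$$\left(\dfrac{\alpha}{\mathfrak{p}_{\QQ(i)}}\right)\equiv \alpha^{(p-1)/2}\pmod{\mathfrak{p}_{\QQ(i)}}$$
for every $\alpha$ coprime to $\mathfrak{p}_{\QQ(i)}$.

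For the first equality, I would simply apply Euler's criterion to $\alpha=i$: the exponent $(p-1)/2$ is divisible by $4$ because $p\equiv 1\pmod 8$, hence $i^{(p-1)/2}=1$. Since the symbol takes values in $\{\pm 1\}$, this forces $\left(\frac{i}{\mathfrak{p}_{\QQ(i)}}\right)=1$.

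For the second equality, the key identity is $(1+i)^2=2i$, so
$$(1+i)^{(p-1)/2}=(2i)^{(p-1)/4}=2^{(p-1)/4}\,i^{(p-1)/4}.$$
The factor $2^{(p-1)/4}$ is congruent modulo $p$ (hence modulo $\mathfrak{p}_{\QQ(i)}$) to $\left(\frac{2}{p}\right)_4$ by the definition of the rational biquadratic residue symbol. For the factor $i^{(p-1)/4}$, I would rewrite it as $(-1)^{(p-1)/8}$, which is exactly $\left(\frac{p}{2}\right)_4$ according to the definition adopted in the paper. Combining these with Euler's criterion applied to $\alpha=1+i$ produces the stated formula.

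The only genuine subtleties are bookkeeping: verifying that $\left(\frac{2}{p}\right)_4$ is well-defined (which requires $\left(\frac{2}{p}\right)=1$, guaranteed by $p\equiv 1\pmod 8$), confirming that the congruence $(1+i)^{(p-1)/2}\equiv\pm 1\pmod{\mathfrak{p}_{\QQ(i)}}$ lifts to an equality of signs (which follows because both sides lie in $\{\pm 1\}$ and $\mathfrak{p}_{\QQ(i)}$ does not divide $2$), and that the identification $i^{(p-1)/4}=(-1)^{(p-1)/8}$ is legitimate, i.e. that $(p-1)/4$ is even, which again uses $p\equiv 1\pmod 8$. No deep obstacle arises; the whole argument is essentially a short manipulation with Euler's criterion and the definitions of the rational biquadratic symbols, and indeed this is precisely why the statement is quoted from Lemmermeyer.
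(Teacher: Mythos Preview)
Your argument is correct: the reduction to Euler's criterion in $\QQ(i)$ together with $(1+i)^2=2i$ and the parity bookkeeping coming from $p\equiv 1\pmod 8$ is exactly the standard computation, and all the congruences you state hold for the reasons you give.

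Note, however, that the paper does not supply its own proof of this lemma; it simply quotes the result from Lemmermeyer's \emph{Reciprocity Laws}. So there is nothing to compare against beyond observing that what you wrote is precisely the kind of direct verification one finds in that reference.
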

\begin{lem}\label{3}
Let $F=\QQ(\sqrt{q},i)$ where $q\equiv 5\pmod 8$ and $\varepsilon_q$ be the fundamental unite of $\QQ(\sqrt{q})$, then
\begin{enumerate}[\rm(i)]
\item $\left(\dfrac{p, i}{\mathfrak{l}_{F}}\right)=1$ for all prime ideal  $\mathfrak{l}_{F}$ of $F$.
\item $\left(\dfrac{p, \varepsilon_q}{\mathfrak{l}_{F}}\right)=1$ for all odd prime ideal  $\mathfrak{l}_{F}\neq \mathfrak{p}_{F}$ of $F$.
\item $\left(\dfrac{p, \varepsilon_q}{\mathfrak{p}_{F}}\right)=\left(\dfrac{p, \varepsilon_q}{\mathfrak{2}_{F}}\right)=\left\{
                                                                 \begin{array}{ll}
                                                                   1, & \hbox{ if $\left(\dfrac{p}{q}\right)=-1$;} \\
                                                                   \left(\dfrac{p}{q}\right)_4\left(\dfrac{q}{p}\right)_4, & \hbox{ if $\left(\dfrac{p}{q}\right)=1$.}
                                                                 \end{array}
                                                               \right.
$
\end{enumerate}
\end{lem}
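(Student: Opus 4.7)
The plan is to evaluate the Hilbert symbols prime-by-prime in $F$, using the tame formula at primes of odd residue characteristic, the hypothesis $p\equiv 1\pmod 8$ to handle the prime above $2$, Scholz-type biquadratic reciprocity at primes above $p$, and the global product formula to close the argument.

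For part (i), I would use the identity $(1+i)^2=2i$, which lets one rewrite $i=(1+i)^2/2$ in $F$ and therefore conclude $\left(\dfrac{p,i}{\mathfrak{l}_F}\right)=\left(\dfrac{p,2}{\mathfrak{l}_F}\right)$ at every prime $\mathfrak{l}_F$ of $F$. The right-hand side is trivially $1$ at every prime of odd residue characteristic distinct from $p$ (both entries are units); at a prime above $p$ it collapses to the Legendre symbol $\left(\dfrac{2}{p}\right)=1$ because $p\equiv 1\pmod 8$; and at $\mathfrak{2}_F$ the same congruence forces $p$ to be a square already in $\QQ_2\subset F_{\mathfrak{2}_F}$, so the symbol is again $1$.

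For part (ii), at any prime $\mathfrak{l}_F$ of odd residue characteristic not lying above $p$, both $p$ and $\varepsilon_q\in\mathcal{O}_{\QQ(\sqrt q)}^{\times}$ are units of $F_{\mathfrak{l}_F}$, so the tame formula immediately gives $\left(\dfrac{p,\varepsilon_q}{\mathfrak{l}_F}\right)=1$; this covers in particular the primes above $q$. For part (iii), I would first compute the symbol at $\mathfrak{p}_F$: since $v_{\mathfrak{p}_F}(p)=1$ and $v_{\mathfrak{p}_F}(\varepsilon_q)=0$, the tame formula identifies it with the residue-field Legendre symbol $(\varepsilon_q/\mathfrak{p}_F)$. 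If $\left(\dfrac{p}{q}\right)=-1$, then $p$ is inert in $\QQ(\sqrt q)/\QQ$, so the residue field at $\mathfrak{p}_F$ contains $\mathbb{F}_{p^2}$; since $N_{\QQ(\sqrt q)/\QQ}(\varepsilon_q)=\pm 1$ is a square in $\mathbb{F}_p$ (using $p\equiv 1\pmod 4$), the element $\varepsilon_q$ is a square in $\mathbb{F}_{p^2}$ and the symbol equals $1$. If $\left(\dfrac{p}{q}\right)=1$, the residue field is $\mathbb{F}_p$, the symbol coincides with the classical $(\varepsilon_q/\mathfrak{p}_{\QQ(\sqrt q)})$, and Scholz's biquadratic reciprocity law identifies it with $\left(\dfrac{p}{q}\right)_4\left(\dfrac{q}{p}\right)_4$.

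The matching value at $\mathfrak{2}_F$ is then extracted from the Hilbert product formula $\prod_v(p,\varepsilon_q)_v=1$: the infinite places contribute trivially because $F$ is totally complex, the finite places away from $p$ and $2$ contribute trivially by (ii), so the product of the contributions at primes above $p$ forces the value at $\mathfrak{2}_F$. The main obstacle will be the case $\left(\dfrac{p}{q}\right)=1$, where the Scholz-type identification $(\varepsilon_q/\mathfrak{p}_{\QQ(\sqrt q)})=\left(\dfrac{p}{q}\right)_4\left(\dfrac{q}{p}\right)_4$ is essential; Lemma~\ref{5:004} will likely enter at this step, through the behaviour of $p$ and of $1+i$ in $\QQ(i)\subset F$ when lifting the biquadratic residue symbols to a computation at primes above $p$.
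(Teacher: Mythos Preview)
Your argument is correct and follows essentially the same route as the paper: the tame formula at odd primes away from $p$, the reduction of $\bigl(\tfrac{p,\varepsilon_q}{\mathfrak p_F}\bigr)$ to the residue symbol $\bigl(\tfrac{\varepsilon_q}{\mathfrak p_{\QQ(\sqrt q)}}\bigr)$, with Scholz's reciprocity in the split case and a norm argument in the inert case, and finally the product formula for the prime above~$2$.

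Two minor differences are worth flagging. First, for part~(i) the paper does \emph{not} use your $i=(1+i)^2/2$ substitution; at a prime $\mathfrak p_F$ above $p$ it instead computes
\[
\left(\dfrac{p,i}{\mathfrak p_F}\right)=\left(\dfrac{i}{\mathfrak p_F}\right)=\left(\dfrac{i}{\mathfrak p_{\QQ(i)}}\right)=1
\]
directly from Lemma~\ref{5:004}. So Lemma~\ref{5:004} is invoked by the paper in part~(i), not in part~(iii) as you conjecture in your last sentence; the citation to Scholz alone handles the biquadratic identification in~(iii). Second, your own observation that $p\equiv 1\pmod 8$ forces $p$ to be a square already in $\QQ_2\subset F_{\mathfrak 2_F}$ in fact gives $\bigl(\tfrac{p,x}{\mathfrak 2_F}\bigr)=1$ for \emph{every} $x$, so you could reuse it for~(iii) instead of the product formula. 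The paper, like you, appeals to the product formula at this last step; note however that when $\bigl(\tfrac{p}{q}\bigr)=1$ there are four primes of $F$ above $p$, all contributing the same value, so the product-formula bookkeeping deserves a word of care (your direct local-square argument sidesteps this entirely).
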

\begin{proof}
(i) Let $\mathfrak{l}_F$ be an odd prime ideal   of $F$.\\
 \indent If $\mathfrak{l}_F\neq \mathfrak{q}_F$, then $\mathfrak{l}_F$ is a prime ideal of $F$ unramified in  $F(\sqrt q)$ (see the proof of the following theorem), hence
\begin{align*}
    \left(\dfrac{p, i}{\mathfrak{l}_{F}}\right)=\displaystyle\left(\dfrac{p}{\mathfrak{l}_{F}}\right)^{v(i)}=1,  \text{  \cite[p. 205]{Gr03}}.
\end{align*}
\indent If $\mathfrak{l}_{F}=\mathfrak{p}_{F}$, the prime ideal of $F$ above  $p$, then
\begin{align*}
\left(\dfrac{p, i}{\mathfrak{p}_{F}}\right)&=\left(\dfrac{i, p}{\mathfrak{p}_{F}}\right)&  \\
&=\displaystyle\left(\dfrac{i}{\mathfrak{p}_{F}}\right) & \text{\cite[p. 205]{Gr03}}\\
&=\displaystyle\left(\dfrac{i}{\mathfrak{p}_{\QQ(i)}}\right) & \text{\cite[p. 205]{Gr03}}\\
&=1.&\text{(Lemma \ref{5:004}})
\end{align*}
\indent (ii) Same proof as in (i).\\
\indent (iii) the inertia degree of  $\mathfrak{p}_{F}$ is equal to $1$ in $F/\QQ(\sqrt{p})$, which implies that
\begin{align*}
\left(\dfrac{p,  \varepsilon_q}{\mathfrak{p}_{F}}\right)&=\left(\dfrac{ \varepsilon_q, p}{\mathfrak{p}_{F}}\right)& \\
&=\displaystyle\left(\dfrac{ \varepsilon_q}{\mathfrak{p}_{F}}\right) & \text{\cite[p. 205]{Gr03}}\\
&=\displaystyle\left(\dfrac{ \varepsilon_q}{\mathfrak{p}_{\QQ(\sqrt{q})}}\right) & \text{\cite[p. 205]{Gr03}}
\end{align*}
Suppose that $\left(\dfrac{p}{q}\right)=1$, then
\begin{align*}
&= \left(\dfrac{p}{q}\right)_4\left(\dfrac{q}{p}\right)_4.&\text{\cite[p. 101]{Sc34}}
\end{align*}
Suppose that $\left(\dfrac{p}{q}\right)=-1$. With a same argument as above, we get:
\begin{align*}
\left(\dfrac{p,  \varepsilon_q}{\mathfrak{p}_{F}}\right)=\displaystyle\left(\dfrac{\mathcal{N}_{\QQ(\sqrt{q})/\QQ)}( \varepsilon_q)}{p}\right)
= \displaystyle\left(\dfrac{-1}{p}\right)=1, \text{  \cite[p. 205]{Gr03}}.
\end{align*}
The product formula for the Hilbert symbol implies that  $\left(\dfrac{p, \varepsilon_p}{\mathfrak{p}_{F}}\right)=\left(\dfrac{p, \varepsilon_p}{\mathfrak{2}_{F}}\right)$.
\end{proof}
\begin{lem}\label{4}
 Let $F=\QQ(\sqrt{q},i)$ where $q\equiv 3\pmod 4$  and $\varepsilon_q$ be the fundamental unite of $\QQ(\sqrt{q})$. Then
\begin{enumerate}[\rm(i)]
\item $\left(\dfrac{p, i}{\mathfrak{l}_{F}}\right)=1$ for all prime ideal $\mathfrak{l}_{F}$ of $F$.
\item $\left(\dfrac{p, \sqrt{i\varepsilon_q}}{\mathfrak{l}_{F}}\right)=1$ for all odd prime ideal  $\mathfrak{l}_{F}\neq \mathfrak{p}_{F}$ of $F$.
\item $\left(\dfrac{p, \sqrt{i\varepsilon_q}}{\mathfrak{p}_{F}}\right)=\left(\dfrac{p, \sqrt{i\varepsilon_q}}{\mathfrak{2}_{F}}\right)=\left\{
                                                                 \begin{array}{l}
                                                                   1  \hbox{, if $\left(\dfrac{p}{q}\right)=-1$;} \\
                                                                   \left(\dfrac{2}{p}\right)_4\left(\dfrac{p}{2}\right)_4\left(\dfrac{\sqrt{2\varepsilon_q}}{\mathfrak{p}_{\QQ(\sqrt{q})}}\right) \hbox{, if $\left(\dfrac{p}{q}\right)=1$.}
                                                                 \end{array}
                                                               \right.
$
\end{enumerate}
\end{lem}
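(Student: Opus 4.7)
The plan is to mirror the structure of Lemma \ref{3}, with the role of $\varepsilon_q$ (which there had norm $-1$) played by $\sqrt{i\varepsilon_q}$, the appropriate unit of $F$ when $q\equiv 3\pmod 4$. The fact that $\sqrt{i\varepsilon_q}\in F$ comes from a short classical computation: writing $\varepsilon_q=a+b\sqrt q$ with $a^2-qb^2=1$, minimality of $\varepsilon_q$ forces $a$ to be even (otherwise $(\varepsilon_q\pm 1)^2=2(a\pm 1)\varepsilon_q$ with $2(a\pm 1)$ a rational square would yield $\sqrt{\varepsilon_q}\in\mathbb{Q}(\sqrt q)$); consequently $\gcd(a-1,a+1)=1$, and the factorisation $(a-1)(a+1)=qb^2$ gives $\sqrt{a+1}\in\mathbb{Q}(\sqrt q)$. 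Combined with $(1+\varepsilon_q)^2=2(a+1)\varepsilon_q$, this yields $\sqrt{2\varepsilon_q}\in\mathbb{Q}(\sqrt q)$, and from $(1+i)^2=2i$ one deduces the decomposition
$$\sqrt{i\varepsilon_q}\;=\;\frac{(1+i)\sqrt{2\varepsilon_q}}{2}\in F,$$
which is the main technical tool of the proof.

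Parts (i) and (ii) go through verbatim as in Lemma \ref{3}. For (i), at an odd $\mathfrak{l}_F\neq\mathfrak{p}_F$ the extension $F(\sqrt p)/F$ is unramified and the reduction formula gives $\left(\frac{p,i}{\mathfrak{l}_F}\right)=\left(\frac{p}{\mathfrak{l}_F}\right)^{v(i)}=1$; at $\mathfrak{p}_F$ I swap arguments, apply the reduction formula in the unramified extension $F(\sqrt i)/F$, descend to $\mathbb{Q}(i)$, and conclude via $\left(\frac{i}{\mathfrak{p}_{\mathbb{Q}(i)}}\right)=1$ from Lemma \ref{5:004}. Part (ii) is the same argument, now using only that $\sqrt{i\varepsilon_q}$ is a unit of $F$.

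For (iii), the equality of the symbols at $\mathfrak{p}_F$ and $\mathfrak{2}_F$ follows from the product formula for the Hilbert symbol: $F$ is totally imaginary, and by (i)--(ii) the local contributions at every other finite place vanish. The main computation is at $\mathfrak{p}_F$; bilinearity applied to the decomposition of $\sqrt{i\varepsilon_q}$ gives
$$\left(\frac{p,\sqrt{i\varepsilon_q}}{\mathfrak{p}_F}\right)=\left(\frac{p,1+i}{\mathfrak{p}_F}\right)\left(\frac{p,\sqrt{2\varepsilon_q}}{\mathfrak{p}_F}\right)\left(\frac{p,2}{\mathfrak{p}_F}\right)^{-1},$$
the last factor being $1$ because $p\equiv 1\pmod 8$ implies $\left(\frac{2}{p}\right)=1$. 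When $\left(\frac pq\right)=1$, the residue field of $\mathfrak{p}_F$ is $\mathbb{F}_p$; descending the first factor to $\mathbb{Q}(i)$ and applying Lemma \ref{5:004} yields $\left(\frac 2p\right)_4\left(\frac p2\right)_4$, while descending the second to $\mathbb{Q}(\sqrt q)$ yields $\left(\frac{\sqrt{2\varepsilon_q}}{\mathfrak{p}_{\mathbb{Q}(\sqrt q)}}\right)$, giving the claimed formula. When $\left(\frac pq\right)=-1$, $p$ is inert in $\mathbb{Q}(\sqrt q)$ and the same norm identity $\left(\frac{x}{\mathfrak p}\right)=\left(\frac{N(x)}{p}\right)$ used in Lemma \ref{3}(iii) reduces both nontrivial factors to $\left(\frac{N_{\mathbb{Q}(i)/\mathbb{Q}}(1+i)}{p}\right)=\left(\frac 2p\right)=1$ and $\left(\frac{\pm N_{\mathbb{Q}(\sqrt q)/\mathbb{Q}}(\sqrt{2\varepsilon_q})}{p}\right)=\left(\frac{\pm 2}{p}\right)=1$ respectively, using $N(\sqrt{2\varepsilon_q})^2=N(2\varepsilon_q)=4$.

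The main obstacle I anticipate is the justification that $\sqrt{i\varepsilon_q}\in F$ together with the decomposition identity; once these are established, the rest is a mechanical chase through the reductions of Lemma \ref{3} applied factor by factor, with only the sign ambiguity in $N(\sqrt{2\varepsilon_q})=\pm 2$ requiring a little care (but neutralised by $p\equiv 1\pmod 8$).
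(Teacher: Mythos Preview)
Your proposal is correct and follows essentially the same route as the paper. Both arguments hinge on the identity $2\sqrt{i\varepsilon_q}=(1+i)\sqrt{2\varepsilon_q}$, which reduces the symbol at $\mathfrak{p}_F$ to a product of factors living in $\QQ(i)$ and in $\QQ(\sqrt q)$; the paper cites \cite{Az-00} for $\sqrt{2\varepsilon_q}\in\QQ(\sqrt q)$ where you supply a self-contained proof, and the paper eliminates $\left(\frac{p,2}{\mathfrak{p}_F}\right)$ via $2i=(1+i)^2$ together with part~(i) rather than by $\left(\frac{2}{p}\right)=1$ directly, but these are cosmetic variations.

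One small slip to repair: in the case $\left(\frac{p}{q}\right)=-1$, you evaluate the $(1+i)$-factor as $\left(\frac{N_{\QQ(i)/\QQ}(1+i)}{p}\right)$. That global norm is not the right object here, because $p$ splits (not inert) in $\QQ(i)$; the norm identity from Lemma~\ref{3}(iii) refers to the residue-field norm $N_{\mathbb F_{p^2}/\mathbb F_p}$ attached to the degree-$2$ prime $\mathfrak p_F$. Since $p\equiv 1\pmod 4$, the image of $i$ already lies in $\mathbb F_p$, so Frobenius fixes $1+i$ and its residue norm is $(1+i)^2=2i$, not $(1+i)(1-i)=2$. The conclusion $\left(\frac{1+i}{\mathfrak p_F}\right)=1$ is of course unaffected --- every element of $\mathbb F_p^\times$ is a square in $\mathbb F_{p^2}^\times$ --- but the written justification should be adjusted. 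Note that the paper itself does not spell out the case $\left(\frac{p}{q}\right)=-1$ in its proof of~(iii) at all, so here you are filling in detail the paper leaves implicit.
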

\begin{proof}By a similar approach of previous lemma, we  get (i) and (ii). For (iii), remark that $2\varepsilon_q$  is a square in $\QQ(\sqrt{q}) $ (see \cite{Az-00}), then $i\varepsilon_q$ is a square in $F$, since $2 \sqrt{i\varepsilon_q}=(1+i) \sqrt{2\varepsilon_q}$. As the Hilbert symbol is a bilinear map with values in $\{+1, -1\}$ and $2i=(1+i)^2$, so
\begin{align*}
\left(\dfrac{p, \sqrt{i\varepsilon_q}}{\mathfrak{p}_{F}}\right)&=\left(\dfrac{p, 2}{\mathfrak{p}_{F}}\right)\left(\dfrac{p, 1+i}{\mathfrak{p}_{F}}\right)\left(\dfrac{p, \sqrt{2\varepsilon_q}}{\mathfrak{p}_{F}}\right)&\\
&=\left(\dfrac{p, i}{\mathfrak{p}_{F}}\right)\left(\dfrac{p, 1+i}{\mathfrak{p}_{F}}\right)\left(\dfrac{p, \sqrt{2\varepsilon_q}}{\mathfrak{p}_{F}}\right)\\
&=\left(\dfrac{1+i}{\mathfrak{p}_{F}}\right)\left(\dfrac{\sqrt{2\varepsilon_q}}{\mathfrak{p}_{F}}\right)\\
&=\left(\dfrac{1+i}{\mathfrak{p}_{\QQ(i)}}\right)\left(\dfrac{\sqrt{2\varepsilon_q}}{\mathfrak{p}_{\QQ(\sqrt{q})}}\right)\text{\cite[p. 205]{Gr03}}\\
&=\left(\dfrac{2}{p}\right)_4\left(\dfrac{p}{2}\right)_4\left(\dfrac{\sqrt{2\varepsilon_q}}{\mathfrak{p}_{\QQ(\sqrt{q})}}\right).
\end{align*}
\end{proof}
\begin{thm}\label{woroujda:1}
Let $p$ and $q$ be primes  as  above and $r$ be the rank of the $2$-class group of $\QQ(\sqrt{q}, \sqrt{p}, i)$.
\begin{enumerate}[\rm\indent(1)]
\item If $q\equiv 5\pmod 8$, then $$r=\left\{
                             \begin{array}{ll}
                               1, & \hbox{if $\left(\dfrac{p}{q}\right)=-1$;} \\
                               2, & \hbox{if $\left(\dfrac{p}{q}\right)=1$ and $\left(\dfrac{p}{q}\right)_4=-\left(\dfrac{q}{p}\right)_4$;} \\
                               3, & \hbox{if $\left(\dfrac{p}{q}\right)=1$ and $\left(\dfrac{p}{q}\right)_4=\left(\dfrac{q}{p}\right)_4$.}
                             \end{array}
                           \right.$$
\item If $q\equiv 3\pmod 4$, then, by putting  $\eta=\left(\dfrac{\sqrt{2\varepsilon_q}}{\mathfrak{p}_{\QQ(\sqrt{q})}}\right)$ if $\left(\dfrac{p}{q}\right)=1$,  we obtain  $$r=\left\{
                             \begin{array}{ll}
                               1, & \hbox{if $\left(\dfrac{p}{q}\right)=-1$;} \\
                               2, & \hbox{if $\left(\dfrac{p}{q}\right)=1$ and $\left(\dfrac{2}{p}\right)_4=-\left(\dfrac{p}{2}\right)_4\eta$;} \\
                               3, & \hbox{if $\left(\dfrac{p}{q}\right)=1$ and $\left(\dfrac{2}{p}\right)_4=\left(\dfrac{p}{2}\right)_4\eta$.}
                             \end{array}
                           \right.$$
\end{enumerate}
\end{thm}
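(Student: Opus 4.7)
The strategy is to view $\mathbf{k}^{*}=F(\sqrt{p})$ as a quadratic extension of $F=\QQ(\sqrt{q},i)$ and to apply Chevalley's ambiguous class-number formula. In this setting the $\mathbb{F}_{2}$-rank of $Cl_{2}(\mathbf{k}^{*})$ is controlled by three pieces of data: the $2$-class group of $F$, the number $t$ of places of $F$ ramifying in $\mathbf{k}^{*}/F$, and the unit-norm index $[E_{F}:E_{F}\cap N_{\mathbf{k}^{*}/F}\mathbf{k}^{*\times}]$. The Hilbert-symbol calculations in Lemmas~\ref{3} and~\ref{4} have been carried out precisely to feed this formula.

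First I would assemble the information on $F=\QQ(\sqrt{q},i)$: under our hypotheses on $q$ the structure of $Cl_{2}(F)$ is known from the literature, and its unit group modulo torsion is generated by $\varepsilon_{q}$ when $q\equiv 5\pmod 8$ and by $\sqrt{i\varepsilon_{q}}$ when $q\equiv 3\pmod 4$ (the latter exists because $2\varepsilon_{q}$ is a square in $\QQ(\sqrt{q})$, as recalled in the proof of Lemma~\ref{4}). Next the ramification count $t$: since $p\equiv 1\pmod 8$ the prime $p$ splits in $\QQ(i)$, and a prime of $\QQ(i)$ above $p$ then splits or is inert in $F=\QQ(i)(\sqrt{q})$ according as $\left(\frac{p}{q}\right)=+1$ or $-1$; no infinite places contribute since $F$ is totally complex. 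Hence $t=4$ when $\left(\frac{p}{q}\right)=1$ and $t=2$ when $\left(\frac{p}{q}\right)=-1$.

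The unit-norm index is then read off from the Hilbert-symbol tables already proved: by the product formula a unit $\varepsilon\in E_{F}$ lies in $N_{\mathbf{k}^{*}/F}\mathbf{k}^{*\times}$ if and only if $\left(\frac{p,\varepsilon}{\mathfrak{l}_{F}}\right)=1$ at every place $\mathfrak{l}_{F}$ of $F$. Lemmas~\ref{3} and~\ref{4} show that $i$ is always a local norm, while the second generator is a local norm precisely in the cases distinguished by the value of the biquadratic-symbol product appearing in the statement; this gives unit index $1$ or $2$ in the various sub-cases. Combining the three inputs via Chevalley's formula then reproduces the claimed value $r\in\{1,2,3\}$ in each sub-case.

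The main obstacle I anticipate is passing from the rank of the ambiguous class group $Cl_{2}(\mathbf{k}^{*})^{\mathrm{Gal}(\mathbf{k}^{*}/F)}$ to the rank of $Cl_{2}(\mathbf{k}^{*})$ itself; one expects equality here because $r\leq 3$ for abstract reasons (the multiquadratic extension $\mathbf{k}^{*}/\QQ$ admits only three independent Kummer generators), but the comparison must be made precise. A secondary technical care is the wild place $\mathfrak{2}_{F}$ in the product formula, which is precisely why the Hilbert symbols at $\mathfrak{2}_{F}$ were separately recorded in Lemmas~\ref{3}(iii) and~\ref{4}(iii).
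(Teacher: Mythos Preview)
Your approach is essentially the paper's own: apply Chevalley's ambiguous class number formula to $\mathbf{k}^{*}/F$ with $F=\QQ(\sqrt{q},i)$, count $t\in\{2,4\}$ from the splitting of $p$ in $F$, and read off the unit-norm index $e$ from Lemmas~\ref{3} and~\ref{4}. Your ``main obstacle'' dissolves once you use the input the paper actually invokes, namely that the class number of $F$ is odd: then $Cl_{2}(F)=0$, so the norm kills $Cl_{2}(\mathbf{k}^{*})$, the nontrivial Galois element acts by inversion, and hence $Cl_{2}(\mathbf{k}^{*})^{\mathrm{Gal}(\mathbf{k}^{*}/F)}=Cl_{2}(\mathbf{k}^{*})[2]$, giving $r=t-e-1$ on the nose.
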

\begin{proof}
Let $F$ denote the field  $\QQ(\sqrt q,i)$ defined above  and $\varepsilon_q$ be the fundamental unit of the $\QQ(\sqrt{q})$. According to \cite{Az99:1},   the unit group  of $F$ is equal to
$$
\left\{
  \begin{array}{ll}
    \langle i, \varepsilon_q\rangle, & \hbox{if $q\equiv 5\pmod 8$;} \\
     \langle i, \sqrt{i\varepsilon_q}\rangle, & \hbox{if $q\equiv 3\pmod 4$.}
  \end{array}
\right.
$$
\indent As the class number of the $F$ is odd, then by the ambiguous class number formula (see \cite{Ch33}),  we have :
\begin{equation*}
r= t-e-1,
\end{equation*}
where $t$ is the number of primes of $F$ that ramify in  $\mathbf{k}^*/F$ ($\mathbf{k}^*=\QQ(\sqrt{q}, \sqrt{p}, i)$ is the genus field of
$\mathbf{k}=\QQ(\sqrt{pq}, i)$) and $e$ is determined by  $2^e=[E_F:E_F\cap N_{\mathbf{k}^*/F}((\mathbf{k}^*)^\times)]$. The following diagram helps us to calculate the number $t$.
 \begin{figure}[H]
$$
 \xymatrix@R=0.22cm@C=0.4cm{&\QQ(\sqrt q)\ar[drr]\\
&&& F=\QQ(\sqrt q, i)\ar[drr]\\
\QQ \ar[ddr]\ar[uur] \ar[r] &\QQ(i)\ar[urr]\ar[drr]&&&& \mathbf{k}^*=\QQ(\sqrt q, \sqrt{p}, i)\\
  &&& \mathbf{k}=\QQ(\sqrt{pq},i) \ar[urr]\\
& \QQ(\sqrt{pq})\ar[urr]  }
$$
\caption{}
\end{figure}
\noindent Let $l$ be a prime. Since the extension $\mathbf{k}/\mathbf{k}^*$  is unramified, then
$$e(\mathfrak{l}_{F}/l).e(\mathfrak{l}_{\mathbf{k}^*}/\mathfrak{l}_{F})=e(\mathfrak{l}_{\mathbf{k}}/l).$$
As $$e(\mathfrak{l}_{F}/l)=\left\{
                                       \begin{array}{ll}
                                         2 & \hbox{if $l=q$ or $2$,} \\
                                         1 & \hbox{otherwise,}
                                       \end{array}
                                     \right.$$
 and  $$e(\mathfrak{l}_{\mathbf{k}}/l)=\left\{
                                       \begin{array}{ll}
                                         2 & \hbox{if $l=p$, $q$ or $2$,} \\
                                         1 & \hbox{otherwise,}
                                       \end{array}
                                     \right.$$
so it is easy to see that $$e(\mathfrak{l}_{\mathbf{k}^*}/\mathfrak{l}_{F})=\left\{
                                       \begin{array}{ll}
                                         2 & \hbox{if $l=p$,} \\
                                         1 & \hbox{otherwise.}
                                       \end{array}
                                     \right.$$
Similarly, we find that $$f(\mathfrak{l}_{\mathbf{k}^*}/\mathfrak{l}_{F})=\left\{
                                       \begin{array}{ll}
                                         1 & \hbox{if $\left(\dfrac{p}{q}\right)=1$,} \\
                                         2 & \hbox{if $\left(\dfrac{p}{q}\right)=-1$.}
                                       \end{array}
                                     \right.$$
Therefore $$t=\left\{
                                       \begin{array}{ll}
                                         4 & \hbox{if $\left(\dfrac{p}{q}\right)=1$,} \\
                                         2 & \hbox{if $\left(\dfrac{p}{q}\right)=-1$.}
                                       \end{array}
                                     \right.$$
Finally $$r=\left\{
                                       \begin{array}{ll}
                                         3-e & \hbox{if $\left(\dfrac{p}{q}\right)=1$,} \\
                                         1-e & \hbox{if $\left(\dfrac{p}{q}\right)=-1$.}
                                       \end{array}
                                     \right.$$
The Hasse norm theorem (see e.g. \cite[theorem 6.2, p. 179]{Gr03}) implies that a unit $\varepsilon $ of $F$ is a norm of an element of  $F(\sqrt{p})=\mathbf{k}^*$ if and only if $\left(\dfrac{p, \varepsilon}{\mathfrak{p}_F}\right)=1$, for all $\mathfrak{p}_F\neq \mathfrak{2}_F$ prime ideal of $F$.\\
 If $q\equiv 3\pmod 4$, we conclude
 thanks to the previous lemma that $$e=\left\{
                                         \begin{array}{ll}
                                           0, & \hbox{if $\left(\dfrac{p}{q}\right)=-1$;} \\
                                           1, & \hbox{if $\left(\dfrac{p}{q}\right)=1$ and $\left(\dfrac{2}{p}\right)_4=-\left(\dfrac{p}{2}\right)_4\eta$;} \\
                                           0, & \hbox{if $\left(\dfrac{p}{q}\right)=1$ and $\left(\dfrac{2}{p}\right)_4=\left(\dfrac{p}{2}\right)_4\eta$.}
                                         \end{array}
                                       \right.$$
 This completes the proof of our theorem.
\end{proof}

\section{Application}
Let $G=\mathrm{Gal}(k_2^{(2)}/k)$ be the Galois group of the extension $\mathbf{k}_2^{(2)}/\mathbf{k}$, where  $\mathbf{k}=\QQ(\sqrt{pq}, i)$ and $p$,  $q$ are distinct primes such that  $p\equiv 1\pmod 8$ and $q\equiv 5\pmod 8$ or $p\equiv 1\pmod 8$ and $q\equiv 3\pmod 4$. In this section, we give an application of the previous theorem and  we characterize the group $G$. In particular,  we will find results about $G$ given by Azizi in \cite{Az99:3} and \cite{Az02}.
\begin{thm}\label{}
Let $p$ and $q$ be different primes  defined as above, $\mathbf{k}=\QQ(\sqrt{pq}, i)$, $r$ be the rank  of the $2$-class group of $\QQ(\sqrt{q}, \sqrt{p}, i)$ and $G=\mathrm{G}al(\mathbf{k}_2^{(2)}/\mathbf{k})$. Then $G$ is nonmetacyclic  if and only if $r=3$.
\end{thm}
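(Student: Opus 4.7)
The plan is to reduce the statement to the identity $r=d(H)$, where $H:=\mathrm{Gal}(\mathbf{k}_2^{(2)}/\mathbf{k}^*)$, and then to handle the two implications separately.

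First I would set up the group-theoretic dictionary. Since $\mathbf{k}^*$ is the genus field of $\mathbf{k}$, the inclusion $\mathbf{k}^*\subseteq \mathbf{k}_2^{(1)}$ holds, so by Galois theory $H\supseteq G'$ and $[G:H]=2$. The extension $(\mathbf{k}^*)_2^{(1)}/\mathbf{k}$ is unramified and metabelian, because $\mathrm{Gal}((\mathbf{k}^*)_2^{(1)}/\mathbf{k}^*)$ is an abelian subgroup of index $2$ of $\mathrm{Gal}((\mathbf{k}^*)_2^{(1)}/\mathbf{k})$; a standard class-field-tower argument then shows $(\mathbf{k}^*)_2^{(1)}\subseteq \mathbf{k}_2^{(2)}$. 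Applying Artin reciprocity inside $\mathbf{k}^*$ yields
$$H^{\mathrm{ab}}=H/H'\cong \mathrm{Gal}((\mathbf{k}^*)_2^{(1)}/\mathbf{k}^*)\cong Cl_2(\mathbf{k}^*).$$
Since $d(A)=d(A^{\mathrm{ab}})$ for any finite $2$-group $A$ (Burnside basis theorem), I conclude $r=d(H)$.

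The implication $r=3\Rightarrow G$ is non-metacyclic is then immediate: every subgroup of a metacyclic group is metacyclic, and a metacyclic $2$-group has rank at most $2$; so if $G$ were metacyclic, $H$ would also be metacyclic and we would get $r=d(H)\le 2$, contradicting $r=3$.

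For the converse, Theorem \ref{woroujda:1} tells us that $r\in\{1,2,3\}$, so it suffices to prove that $r\le 2$ forces $G$ to be metacyclic. This is where the prior characterizations of $G$ obtained by Azizi in \cite{Az99:3} and \cite{Az02} enter: they describe, in terms of the same biquadratic residue symbols $\left(\tfrac{p}{q}\right)_4$, $\left(\tfrac{q}{p}\right)_4$, $\left(\tfrac{2}{p}\right)_4$, $\left(\tfrac{p}{2}\right)_4$ and of the invariant $\eta$ that appear in Theorem \ref{woroujda:1}, the precise conditions on $p$ and $q$ under which $G$ is metacyclic. The plan is thus a direct case-by-case comparison: for each of the two congruence cases $q\equiv 5\pmod 8$ and $q\equiv 3\pmod 4$, and for each of the subcases distinguished by the value of $\left(\tfrac{p}{q}\right)$ and the pertinent $4$-th power symbols, I would translate the criterion extracted from \cite{Az99:3,Az02} into the criterion given by Theorem \ref{woroujda:1} and check that they match, using the Hilbert-symbol computations of Lemmas \ref{3} and \ref{4}.

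The main obstacle is precisely this last matching step: one has to rewrite Azizi's earlier conditions for the metacyclicity of $G$ in the language of the symbols used in Theorem \ref{woroujda:1}, and verify case by case that each instance of $r\in\{1,2\}$ corresponds to a metacyclic $G$, while each instance of $r=3$ corresponds to a non-metacyclic $G$. Once this dictionary between the two sets of number-theoretic conditions is in place, the equivalence $G$ non-metacyclic $\iff r=3$ follows.
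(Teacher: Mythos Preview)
Your setup and the implication $r=3\Rightarrow G$ non-metacyclic are correct and match the paper: one identifies $r$ with $d(M)$ for $M=\mathrm{Gal}(\mathbf{k}_2^{(2)}/\mathbf{k}^*)$ and uses that subgroups of metacyclic groups are metacyclic of rank $\le 2$.

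The converse, however, is where your plan diverges from the paper and where it has a genuine gap. You propose to import the metacyclicity criteria from \cite{Az99:3} and \cite{Az02} and match them case by case against Theorem~\ref{woroujda:1}. But in this paper those criteria are \emph{outputs}, not inputs: the theorem you are proving is used afterwards (together with Lemma~\ref{woroujda:2} and \cite{AzTa08}) to recover and sharpen Azizi's earlier results. Using them here would invert the intended logical flow, and you would in any case still owe a verification that those references cover every subcase of Theorem~\ref{woroujda:1}; you flag this as ``the main obstacle'' but do not carry it out.

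The paper avoids all of this with a short group-theoretic argument. Write $p=x^2+16y^2$ and set $\pi_1=x+4yi$, $\pi_2=x-4yi$; then $\mathbf{k}_1=\mathbf{k}(\sqrt{\pi_1})$ and $\mathbf{k}_2=\mathbf{k}(\sqrt{\pi_2})$ are unramified over $\mathbf{k}$ and distinct from $\mathbf{k}^*$, so (since $d(G)=r_0=2$) the three maximal subgroups of $G$ are exactly $H_1$, $H_2$, $M$ corresponding to $\mathbf{k}_1$, $\mathbf{k}_2$, $\mathbf{k}^*$. Because $\mathbf{k}_1$ and $\mathbf{k}_2$ are complex-conjugate, hence isomorphic, one has $d(H_1)=d(H_2)$. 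With this symmetry in hand, a result of Berkovich--Janko \cite{BeJa09} on $2$-groups with $d(G)=2$ gives directly that $G$ is non-metacyclic if and only if $d(M)=3$, i.e.\ $r=3$. This single group-theoretic step replaces your entire case-by-case program.
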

\begin{proof}
Since $p\equiv 1\pmod 8$, then there exist two integers $x$ and $y$
such that $p=x^2+16y^2$. Put $\pi_1=x+4yi$, $\pi_2=x-4yi$,
$\mathbf{k}_1=\mathbf{k}(\sqrt{\pi_1})$ and $\mathbf{k}_2=\mathbf{k}(\sqrt{\pi_2})$.
As $\pi_1$ and $\pi_2$ are ramified in $\mathbf{k}/\QQ(i)$, then the ideals generated by $\pi_1$ and $\pi_2$
are squares of ideals of $\mathbf{k}$. Note that $x$ is odd, thus  $x\equiv  \pm 1\equiv i^2\pmod 4$, then the two equations $\pi_i\equiv\xi^2 $ are solvable in $\mathbf{k}$. We conclude that  the two extensions $\mathbf{k}(\sqrt{\pi_1})/\mathbf{k}$ and $\mathbf{k}(\sqrt{\pi_2})/\mathbf{k}$ are unramified. It is clear that $\mathbf{k}_i\neq\mathbf{k}^*$. Since $r_0=d(G)=2$, then $\mathbf{k}_1$, $\mathbf{k}_2$ and $\mathbf{k}^*$ are  precisely the three unramified quadratic extensions of $\mathbf{k}$. Put $\mathrm{H}_i=\mathrm{G}al(\mathbf{k}_2^{(2)}/\mathbf{k}_i)$ where $i=1, 2$ and $\mathrm{M}=\mathrm{G}al(\mathbf{k}_2^{(2)}/\mathbf{k^*})$. These three subgroups are the maximal subgroups of $G$. As $d(G)=2$ and $\mathbf{k}_1$ is isomorphic to $\mathbf{k}_2$, then according to \cite{BeJa09},  $G$ is nonmetacyclic  if and only if $d(M)=r=3$.
\end{proof}

\begin{lem}\label{woroujda:2}
 Let $k$ be an algebraic number field and  $G=\mathrm{Gal}(k_2^{(2)}/k)$. Let $L$ be an extension of $k$ such that $M=\mathrm{Gal}(k_2^{(2)}/L)$ is a cyclic subgroup of $G$ of index $2$. If  four ideal classes of $k$ capitulate in $L$, then $G$ is abelian or  dihedral group.
\end{lem}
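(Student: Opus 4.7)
\bpr
The strategy is to translate the capitulation hypothesis into a group-theoretic statement about the Verlagerung $V\colon G/G'\to M/M'$ and then exhaust the possible isomorphism types of $G$.

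First, by Artin reciprocity the capitulation map $Cl_2(k)\to Cl_2(L)$ is identified with the group-theoretic transfer $V\colon G/G'\to M/M'$. Since $M$ is cyclic, $M'=1$ and $V$ lands in $M$ itself, so in fact $V\colon G^{ab}\to M$. The hypothesis that four classes capitulate then reads $|\ker V|=4$.

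Next, I would invoke the classical classification of finite $2$-groups possessing a cyclic maximal subgroup of index $2$: up to isomorphism $G$ is one of $\ZZ/2^{n+1}$, $\ZZ/2^n\times\ZZ/2$, the dihedral group $D_{2^{n+1}}$, the generalized quaternion group $Q_{2^{n+1}}$, the semidihedral group $SD_{2^{n+1}}$, or the modular maximal-cyclic group $M_{2^{n+1}}$. Fix a generator $\tau$ of $M$ and $\sigma\in G\setminus M$, so that $\sigma\tau\sigma^{-1}=\tau^s$ (with $s^2\equiv 1\pmod{|M|}$) and $\sigma^2=\tau^c$ for suitable $c$. Using $\{1,\sigma\}$ as coset representatives, a direct computation yields $V(\sigma)=\sigma^2$ and $V(\tau)=\tau\cdot\sigma\tau\sigma^{-1}=\tau^{1+s}$, from which $|\ker V|$ is read off. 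Running through the six families one should find $|\ker V|=4$ precisely when $G\simeq\ZZ/2^n\times\ZZ/2$ or $G\simeq D_{2^{n+1}}$, whereas $|\ker V|=2$ in the cyclic, quaternion, semidihedral and modular cases.

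Combined with the hypothesis, this dichotomy immediately yields the stated conclusion: $G$ is abelian or dihedral. The step I expect to require the most care is the case-by-case kernel computation: one must handle the admissible values $s\in\{\pm 1,\pm 1+2^{n-1}\}\pmod{2^n}$ together with their compatible exponents $c$, and be attentive to small-$n$ degeneracies (for instance $D_4\simeq\ZZ/2\times\ZZ/2$, $Q_8$ occurs only for $n\ge 2$, while the semidihedral and modular families exist only for $n\ge 3$). By contrast, the Artin-reciprocity identification in Step~$1$ and the classification invoked in Step~$2$ are classical and can simply be cited.
\epr
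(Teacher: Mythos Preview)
Your plan is correct and follows essentially the same route as the paper: identify capitulation with the transfer via Artin reciprocity, invoke the classification of $2$-groups with a cyclic subgroup of index~$2$, and then read off $|\ker V|$. The only difference is bookkeeping: the paper cites Kisilevsky \cite{Ki76} to dispose of the abelian, dihedral, quaternion and semidihedral families simultaneously, and carries out the explicit transfer computation only for the modular group (finding $\ker V=\{G',yG'\}$, hence $|\ker V|=2$). Your proposal instead computes all six families by hand from the formulas $V(\sigma)=\sigma^2$, $V(\tau)=\tau^{1+s}$; this is slightly more work but makes the argument self-contained, and your anticipated answers ($|\ker V|=4$ exactly for $\ZZ/2^n\times\ZZ/2$ and the dihedral groups, $|\ker V|=2$ otherwise) are correct.
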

\begin{proof}
we say that an ideal class of $k$ capitulates in $L$ if it is in the kernel of the homomorphism
$j:  Cl_2(k) \longrightarrow  Cl_2(L)$ induced by extension of ideals from $k$
to $L$. Furthermore, the homomorphism $j$ corresponds, by the Artin reciprocity law to the group theoretical transfer $V: G/G' \longrightarrow M$ (For further information on $V$, see for example \cite{Mi89}). Since $M$ is a cyclic subgroup of $G$ of index $2$, then, according to \cite{KS04}, $G$ is isomorphic to one of the following groups:
\begin{enumerate}[\indent\rm(1)]
  \item Cyclic $2$-group or $2$-group of type $(2^n, 2^m)$.
  \item The dihedral group.
  \item The quaternion group.
  \item The semidihedral group.
  \item The modular $2$-group.
\end{enumerate}
If $G$ is one of the first four groups and four ideal classes of $k$ capitulate in $L$, then H. Kisilevsky has shown in \cite{Ki76} that $G$ is abelian or  dihedral group. Next if $G$ is a modular $2$-group, then $G=\langle x, y : x^{2^{n-1}}=y^2, y^{-1}xy=x^{1+2^{n-2}}\rangle$ and $M=\langle x\rangle$. An elementary calculation shows that $\ker(V)=\{G', yG'\}$, i.e. two ideal classes of $k$ capitulate in $L$.
\end{proof}

\begin{thm}\label{}
Let $p$ and $q$ be different primes and  $\eta$ be the number  defined in Theorem $\ref{woroujda:1}$. Put $\mathbf{k}=\QQ(\sqrt{pq}, i)$ and $G=\mathrm{G}al(\mathbf{k}_2^{(2)}/\mathbf{k})$.
\begin{enumerate}[\indent\rm(1)]
\item If $q\equiv 5\pmod 8$ and $\left(\dfrac{p}{q}\right)=-1$, then $G$ is  dihedral.
\item If $q\equiv 5\pmod 8$ and $\left(\dfrac{p}{q}\right)=1$ and $\left(\dfrac{p}{q}\right)_4=-\left(\dfrac{q}{p}\right)_4$, then $G$ is a nonabelian metacyclic group with $G/G'$ is of type $(2, 4)$.
\item If $q\equiv 3\pmod 8$ and $\left(\dfrac{p}{q}\right)=-1$, then $G$ is  abelian or dihedral.
\item If $q\equiv 3\pmod 4$ and $\left(\dfrac{p}{q}\right)=1$ and $\left(\dfrac{2}{p}\right)_4=-\left(\dfrac{p}{2}\right)_4\eta$, then $G$ is a metacyclic group.
\end{enumerate}
\end{thm}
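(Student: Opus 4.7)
The plan is to apply Theorem \ref{woroujda:1} to determine $r$ in each subcase, then to combine this with the immediately preceding theorem (which asserts $G$ is nonmetacyclic if and only if $r=3$) and with Lemma \ref{woroujda:2}. In parts (2) and (4), Theorem \ref{woroujda:1} directly gives $r=2$, hence $G$ is metacyclic, which already proves (4) and the metacyclicity half of (2).

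In parts (1) and (3), Theorem \ref{woroujda:1} gives $r=1$, so $M=\mathrm{Gal}(\mathbf{k}_2^{(2)}/\mathbf{k}^*)$ is a cyclic subgroup of $G$ of index $2$. To apply Lemma \ref{woroujda:2}, I would verify that four ideal classes of $\mathbf{k}$ capitulate in the genus field $\mathbf{k}^*$. This I would do by working with the classes of the prime ideals of $\mathbf{k}$ above $2$, $p$ and $q$: these generate $Cl_2(\mathbf{k})$ (which has $2$-rank $2$ under our assumptions), and using the explicit decomposition $p=\pi_1\pi_2$ in $\QQ(i)$ together with $\sqrt p,\sqrt q\in\mathbf{k}^*$, I would show that the whole $(2,2)$-quotient of $Cl_2(\mathbf{k})$ lies in the capitulation kernel $\ker(j\colon Cl_2(\mathbf{k})\to Cl_2(\mathbf{k}^*))$. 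Lemma \ref{woroujda:2} then forces $G$ to be abelian or dihedral, which is the statement of (3).

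The delicate point is separating (1) from (3). Under $q\equiv 5\pmod 8$, I would rule out $G$ abelian by comparing class numbers at two levels of the tower: if $G$ were abelian, then $\mathbf{k}_2^{(2)}=\mathbf{k}_2^{(1)}$ and $h_2(\mathbf{k}^*)=h_2(\mathbf{k})/2$, which, using Kuroda's class-number formula applied to $\mathbf{k}^*/\QQ$ together with Azizi's unit-index computations \cite{Az99:1}, contradicts the congruence $q\equiv 5\pmod 8$; when $q\equiv 3\pmod 8$ this contradiction disappears, explaining the weaker conclusion in (3). For part (2), after obtaining $G$ metacyclic I would invoke the structure theorem of \cite{McPaRa95}: under $q\equiv 5\pmod 8$, $(p/q)=1$, $(p/q)_4=-(q/p)_4$, it yields $Cl_2(\mathbf{k})\simeq G/G'$ of type $(2,4)$, and nonabelianness follows from $|M|>1$, i.e.\ from $h_2(\mathbf{k}^*)>1$, which is again a consequence of Kuroda's formula.

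The main obstacle is producing the four capitulating classes in (1) and (3): it requires explicit principalisations of the form $\mathfrak{p}\mathcal{O}_{\mathbf{k}^*}=(\sqrt p)$, $\mathfrak{q}\mathcal{O}_{\mathbf{k}^*}=(\sqrt q)$, together with bookkeeping on the composite $\mathfrak{p}\mathfrak{q}$ and on the prime above $2$. Closely related is the second delicate point, the index of units $[E_{\mathbf{k}^*}:\prod_i E_{k_i}]$ needed to run the Kuroda formula, which is precisely where the behaviour of $q$ modulo $8$ enters and divides the analysis between (1) and (3).
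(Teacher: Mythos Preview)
Your overall strategy matches the paper's: use Theorem~\ref{woroujda:1} to compute $r$, invoke the preceding theorem for metacyclicity when $r=2$, and for $r=1$ combine the cyclicity of $M$ with Lemma~\ref{woroujda:2} once four classes are known to capitulate in $\mathbf{k}^*$. The paper does not reprove the capitulation of four classes in case~(1); it simply cites \cite{Az99:2}, where this is established. Likewise, for the nonabelianness in the $q\equiv 5\pmod 8$ cases, the paper uses a ready-made class-number relation from \cite{Az99:2}, namely $h(\mathbf{k}^*)=h(-p)h(\mathbf{k})/4$, together with the criterion of \cite{BeLeSn98} ($G$ abelian iff $h(\mathbf{k}^*)=h(\mathbf{k})/2$), reducing the question to $h(-p)=2$, which fails for $p\equiv 1\pmod 8$. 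Your Kuroda-based route is a valid alternative, but the paper's approach is shorter because the work has already been done in \cite{Az99:2}.

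Two concrete corrections. First, your nonabelianness argument in (2) is wrong as written: $|M|>1$ (equivalently $h_2(\mathbf{k}^*)>1$) does \emph{not} exclude $G$ abelian, since for abelian $G$ one has $|M|=h_2(\mathbf{k})/2$, which is typically $>1$. You should instead reuse the same argument as in (1) (or the paper's $h(-p)$ argument), which applies uniformly whenever $q\equiv 5\pmod 8$. Second, the $(2,4)$ structure of $Cl_2(\mathbf{k})$ in case (2) is not in \cite{McPaRa95} (that paper treats the cyclic case); the correct reference is \cite{AzTa08}.
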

\begin{proof}
Proceeding as in the proof of  Theorem 7 of \cite{Az99:2}, we get  that if $q\equiv 5\pmod 8$, then $h(\mathbf{k}^*)=\dfrac{h(-p)h(\mathbf{k})}{4}$,  where $h(-p)$ denotes the $2$-class number of $\QQ(\sqrt{-p})$. Since $r_0$, the rank of the $2$-class group of $k$, is equal to $2$, then $G$ is abelian if and only if si $h(\mathbf{k}^*)=\dfrac{h(\mathbf{k})}{2}$ (see \cite{BeLeSn98}), this is equivalent to $h(-p)=2$. Which is impossible since $p\equiv 1\pmod 8$.

(1) If $q\equiv 5\pmod 8$ and $\left(\dfrac{p}{q}\right)=-1$, then $G$ is nonabelian. According to Theorem \ref{woroujda:1} the rank of the $2$-class group of $\mathbf{k}^*$ is $r=1$, thus $\mathrm{M}=\mathrm{G}al(\mathbf{k}_2^{(2)}/\mathbf{k^*})$ is cyclic. On the other hand,  Azizi in \cite{Az99:2} has shown, in this situation,  that there are four ideal classes of $\mathbf{k}$ capitulate in $\mathbf{k^*}$, hence the  Lemma \ref{woroujda:2} implies that $G$ is a dihedral group.

(2) If $q\equiv 5\pmod 8$,  $\left(\dfrac{p}{q}\right)=1$ and  $\left(\dfrac{p}{q}\right)_4=-\left(\dfrac{q}{p}\right)_4$, then $G$ is nonabelian and the  $2$-class group of $\mathbf{k}$ is of type $(2, 4)$ (see \cite{AzTa08}). Moreover Theorem \ref{woroujda:1}  yields that  $r=2$, then, according to the previous theorem, we have $G$ is metacyclic.

(3) and (4) are proved similarly.
\end{proof}

\end{document}